\documentclass[11pt]{amsart}

\usepackage{amsmath,amssymb,amsthm}
\usepackage{graphicx}
\usepackage{subfigure}
\usepackage{enumerate}
\usepackage{fullpage}
\usepackage{url}
\usepackage{float}
\usepackage{xspace}
\usepackage[hidelinks]{hyperref}
\usepackage{color}
\usepackage{thmtools, thm-restate}
\usepackage[numbers]{natbib}

\newtheorem{theorem}{Theorem}
\newtheorem{conj}{Conjecture}
\newtheorem{cor}{Corollary}
\newtheorem{lem}{Lemma}[section]
\newtheorem{prop}{Proposition}

\newtheorem{remark}{Remark}

\title{New constructions of optimal arrangements of \\ $2d$ lines in $\mathbb{C}^d$}

\author{Alexey Glazyrin}

\address{Alexey Glazyrin, School of Mathematical \& Statistical Sciences, The University of Texas Rio Grande Valley, Brownsville, TX 78520, USA}
\email{alexey.glazyrin@utrgv.edu}

\date{}

\begin{document}

\maketitle
\begin{abstract}
In this paper we provide new constructions of equiangular tight frames of size $2d$ in $\mathbb{C}^d$. We generalize the doubling construction of Fallon and Iverson to a tensor multiplication construction based on a suitable pair consisting of a complex Hadamard matrix and an equiangular tight frame. In particular, such a pair always exists whenever there is an amicable pair of real Hadamard matrices. Most notably, amicable Hadamard pairs of order $q+1$ exist for all prime powers $q\equiv 3\pmod 4$. We also find specific constructions based on a family of pairs of order 6 and on pairs whose equiangular tight frames are defined by Paley conference matrices with $q\equiv 1\pmod 4$. Finally, we provide a power construction of equiangular tight frames that generalizes the construction of Turyn for conference matrices.
\end{abstract}

\section{Introduction}

An optimal packing of $N$ points in a compact metric space is a configuration of $N$ points maximizing the minimal distance between them \cite{CHS96}. In this paper we study optimal packings in the complex projective space $\mathbb{CP}^{d-1}$. Slightly reformulating the problem, we would like to find a set of unit complex vectors $\{x_1,\ldots,x_N\}\subset\mathbb{C}^d$ such that the maximum absolute inner product $\max\limits_{i\neq j}|\langle x_i, x_j \rangle|$, or \textit{coherence of the configuration}, is as small as possible. The universal bound for this problem is due to Welch \cite{W74} who proved that for any set of $N$ unit complex vectors in $\mathbb{C}^d$ there is a pair $(x_i, x_j)$ such that
\begin{equation}\label{eq:welch}
|\langle x_i, x_j \rangle|^2 \geq \frac {N-d}{d(N-1)}. 
\end{equation}
Configurations of points attaining this bound are known as \textit{equiangular tight frames} (ETFs) of size $d\times N$.

A general problem is to find pairs of $N$ and $d$ for which $d\times N$ ETFs exist. In the real case, it is equivalent to the existence of certain strongly regular graphs \cite{W09}. For the complex case, which is the main subject of this paper, it is known that $d\leq N \leq d^2$ \cite{DGS75, SH03, STDH07}; see also a recent improvement \cite{FJM26} which excludes $(d^2-d+1, d^2)$ from the range of potential sizes of ETFs. The existence of ETFs for $N=d^2$ is a major open problem due to Zauner \cite{Z11}. For a survey on existence of equiangular tight frames with given parameters, see \cite{FM15}. For a general overview of tight frames we refer to the comprehensive text of Waldron \cite{W18}.

A convenient way of describing an ETF is via its \textit{signature matrix}. Let $G$ be an $N\times N$ Gram matrix of a set of unit vectors representing a $d\times N$ ETF. Let $\alpha=\sqrt{\frac {N-d}{d(N-1)}}$ be the coherence of this configuration as determined by the Welch bound (\ref{eq:welch}). Then the signature matrix $S$ of this ETF is determined by $G=I+\alpha S$. The following proposition of Holmes and Paulsen \cite{HP04} provides the description of ETFs via their signature matrices.

\begin{prop}\cite{HP04}\label{prop:signature}
$S$ is a signature matrix of a $d\times N$ equiangular tight frame if and only if the following conditions are satisfied: $S=S^*$, $S$ has zero diagonal entries and unimodular off-diagonal entries, and $S^2=cS+(N-1)I$, where
$$c=(N-2d) \sqrt{\frac {N-1}{d(N-d)}}.$$
\end{prop}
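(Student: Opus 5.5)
We reduce both directions to the spectral characterization of ETFs: unit vectors $x_1,\ldots,x_N\in\mathbb{C}^d$ form a $d\times N$ ETF if and only if the Gram matrix $G$ has unit diagonal, off-diagonal entries of constant modulus $\alpha$, and $G^2=\frac Nd G$. The last condition says $G$ is $\frac Nd$ times a rank-$d$ orthogonal projection; this is the same as tightness, since the frame operator $XX^*$ and $G=X^*X$ share their nonzero spectrum. We also need that equiangular tight frames meet the Welch bound (\ref{eq:welch}) with equality, so the constant modulus is forced to be $\alpha$. This follows from the standard computation $\operatorname{tr}(G^2)\ge \frac{(\operatorname{tr}G)^2}{d}=\frac{N^2}{d}$, with equality exactly when $G$ has $d$ equal nonzero eigenvalues.

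\emph{Forward direction.} Let the $x_i$ form an ETF and write $G=I+\alpha S$. Then $S=S^*$, $S$ has zero diagonal, and since $|G_{ij}|=\alpha$ for $i\ne j$, its off-diagonal entries are unimodular. Substituting into $G^2=\frac Nd G$ gives
\[
I+2\alpha S+\alpha^2S^2=\tfrac Nd I+\tfrac Nd\alpha S,
\]
so that
\[
S^2=\frac{N/d-2}{\alpha}\,S+\frac{N/d-1}{\alpha^2}\,I.
\]
We then plug in $\alpha^2=\frac{N-d}{d(N-1)}$. The constant term becomes $\frac{N-d}{d}\cdot\frac{d(N-1)}{N-d}=N-1$. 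The coefficient of $S$ becomes $\frac{N-2d}{d}\sqrt{\frac{d(N-1)}{N-d}}=(N-2d)\sqrt{\frac{N-1}{d(N-d)}}=c$.

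\emph{Converse.} Suppose $S$ satisfies the stated conditions, and set $G=I+\alpha S$. Reversing the algebra above shows $G^2=\frac Nd G$, so the eigenvalues of $G$ lie in $\{0,\frac Nd\}$. Because $G$ is Hermitian, it is positive semidefinite. Its trace is $N$, so it has exactly $d$ eigenvalues equal to $\frac Nd$, and hence rank $d$. Therefore $G=X^*X$ for some $d\times N$ matrix $X$, whose columns are unit vectors with pairwise inner products of modulus $\alpha$. Finally, $(XX^*)^2=X G X^*=\frac Nd XX^*$ with $XX^*$ invertible, so $XX^*=\frac Nd I$ and the columns form a tight frame. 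They are equiangular at the Welch value, hence an ETF.

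The only step requiring care is the algebraic simplification of the $S$-coefficient into the stated form of $c$. One must track the sign of $N-2d$ (it carries through correctly) and the degenerate case $N=d$, where $\alpha=0$ and the statement should be read with $N>d$.
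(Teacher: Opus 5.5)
Your proof is correct. The paper gives no proof of this proposition and simply quotes it from Holmes and Paulsen \cite{HP04}. Your argument is essentially the standard one from that source: the Gram matrix is $\frac{N}{d}$ times a rank-$d$ orthogonal projection, you rewrite $G^2=\frac{N}{d}G$ in terms of $S$ via $G=I+\alpha S$, and you use the equality case of the Welch bound to fix the modulus. The algebra for $N-1$ and $c$ checks out, and the sign of $N-2d$ carries through because $\alpha>0$.

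One small point concerns the forward direction. The paper defines an ETF as a configuration attaining the Welch bound, so what you actually need there is the implication ``Welch equality $\Rightarrow$ equiangular at modulus $\alpha$ with $G^2=\frac{N}{d}G$.'' This follows from the equality case of the chain $N+N(N-1)\mu^2\ge\operatorname{tr}(G^2)\ge N^2/d$, where $\mu$ is the coherence. That is the same trace computation you cite once you include the first inequality, so nothing is missing.
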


If a matrix $S$ satisfies Proposition \ref{prop:signature}, we will call it a $(c,N)$-matrix or $c$-matrix as it is sometimes more convenient to describe $S$ by the parameter $c$ than by the dimension $d$ of the corresponding frame. 

The case $N=2d$ is especially interesting to us since the condition in Proposition \ref{prop:signature} reads $S^2=(N-1)I$ and connects ETFs of this type to Hadamard and conference matrices (see Section \ref{sect:hadamard}). Recently, Fallon and Iverson studied $d\times 2d$ ETFs and developed the doubling construction that produces frames without direct derivation from Hadamard or conference matrices \cite{FI25}. They conjectured that $d\times 2d$ ETFs exist in all dimensions.

\begin{conj}\cite{FI25}\label{conj:double}
For every positive integer $d\geq 2$, there exists a $d\times 2d$ ETF in $\mathbb{CP}^{d-1}$.
\end{conj}

This conjecture has been verified for all $d\leq 165$ \cite{IJM24}. Moreover, explicit ETFs are known for all $d\leq 150$ except in dimensions $77, 93, 105, 133$. Group frames of size $d\times 2d$ have also been studied in \cite {BCGO25}.

In this paper we present new constructions of $d\times 2d$ ETFs that generalize, respectively, the doubling procedure of Fallon and Iverson and the power construction of Turyn for conference matrices \cite{T71}. The paper is organized as follows. In Section \ref{sect:hadamard} we provide necessary definitions and basic constructions of $d\times 2d$ ETFs via Hadamard and conference matrices. In Section \ref{sect:kronecker} we develop the new construction of ETFs via Kronecker multiplication and explain its consequences for the $d\times 2d$ case. In Section \ref{sect:power} we show how the power construction of Turyn for conference matrices extends to the case of ETFs.

\section{Hadamard and conference matrices}\label{sect:hadamard}

In this section, we provide general definitions, necessary background on Hadamard and conference matrices, and constructions of equiangular tight frames based on Hadamard and conference matrices.

\textit{A Hadamard matrix} is a square matrix $H$ of order $N$ with all entries $\pm 1$ satisfying $HH^{\text{T}}=NI$. Such matrices first appeared in a paper of Hadamard \cite{H93}, who showed that they may exist only for $N=1, 2$ or multiples of 4. Their existence for all such $N$ is a famous open problem \cite{P33}. \textit{A complex Hadamard matrix} is a square matrix $H$ of order $N$ with unimodular complex entries satisfying $H H^* = NI$. In contrast to the real case, complex Hadamard matrices exist for all $N$ \cite{S67, TZ06}. 

A Hadamard matrix $H$ is symmetric if $H=H^{\text{T}}$. A Hadamard matrix $H$ is called skew Hadamard if $H=C+I$, where $C$ is a skew-symmetric matrix. Both symmetric and skew Hadamard matrices of order $N$ are widely believed to exist for all $N$ divisible by 4 \cite{GS70, SY92} (see \cite{CP24} for the current status of all these conjectures). A pair of Hadamard matrices $(X, Y)$ is called amicable if $X$ is symmetric, $Y$ is skew Hadamard, and $XY^{\text{T}} = YX^{\text{T}}$. The notion of amicable pairs appeared in \cite{W70,W71} and, conjecturally, amicable pairs also exist for all $N$ divisible by 4 \cite{SY92}.

\textit{A conference matrix} is a square matrix $C$ of order $N$ with zero diagonal entries and $\pm 1$ off-diagonal entries satisfying $CC^{\text{T}}=(N-1)I$. Conference matrices first appeared in the work of Belevitch on telephone conference networks \cite{B68}. We note that $C$ is a skew-symmetric conference matrix if and only if $C+I$ is a skew Hadamard matrix.

There are two constructions of $d\times 2d$ ETFs based on conference matrices \cite{FI25}. If $C$ is a symmetric conference matrix of order $N$, then $C^2=(N-1)I$ and $C$ satisfies the criteria for a signature matrix from Proposition \ref{prop:signature}. Since $c=0$, $N=2d$ and $C$ defines a $d\times 2d$ ETF. If $C$ is a skew-symmetric conference matrix, then $S=iC$ satisfies the criteria from Proposition \ref{prop:signature} and, similarly to the previous case, $N=2d$ and $S$ defines a $d\times 2d$ ETF.

These two constructions can be combined with constructions of conference matrices to establish the existence of $d\times 2d$ ETFs for various dimensions. Due to the standard construction of Paley \cite{P33}, symmetric conference matrices of order $N$ exist for all $N=q+1$, where $q$ is a prime power congruent to $1\pmod 4$, and skew-symmetric conference matrices exist for all $N=q+1$, where $q$ is a prime power congruent to $3\pmod 4$, thus immediately confirming the conjecture for $d=(q+1)/2$.

Recently, Fallon and Iverson showed that there exists an $N\times 2N$ ETF if there exists a $d\times N$ ETF with $|c|\leq 1$. If $c=0$, this essentially means that one can double the size of an ETF satisfying Conjecture \ref{conj:double}. As for the case $c\neq 0$, Fallon and Iverson used the construction of Strohmer who showed the existence of an $(N/2-1)\times (N-1)$ ETF \cite{S08,R07} with $|c|<1$ whenever there is a skew Hadamard matrix of order $N$. Combining this with the Paley construction and doubling procedure confirms Conjecture \ref{conj:double} when $d$ is a prime power congruent to $3\pmod 4$.

Another doubling construction was found by Iverson, Jasper, and Mixon \cite{IJM24} who showed that the existence of a symmetric conference matrix of order $N$ implies the existence of an $(N-1)\times (2N-2)$ ETF. This, in particular, confirms Conjecture \ref{conj:double} when $d$ is a prime power congruent to $1\pmod 4$.

If $x$ is a vector in an ETF, we can replace it with $\alpha x$, where $\alpha$ is a unimodular complex number, without violating ETF conditions. This observation lies behind the notion of \textit{Seidel switching}. Two equiangular tight frames are called \textit{switching equivalent} if their vectors can be obtained via permutations and a finite number of substitutions $x\rightarrow\alpha x$, where $|\alpha|=1$. This equivalence was defined by Seidel in the real case \cite{LS66, S68}, that is, for $\alpha=-1$. Now it is used for complex ETFs as well \cite{BPT09, CW16}.

We can fix a vector $x_1$ in an ETF and switch all other vectors so that $\langle x_1, x_i \rangle$ is a positive real number for all $i$. In this case, the signature matrix $S$ of the ETF will have ones as off-diagonal entries in the first row and column. Following the literature \cite{BPT09}, we say that $S$ is \textit{a standard form} of the ETF. We also say that the matrix $W$ obtained from $S$ by deleting the first row and the first column is \textit{a core} of the ETF \cite{T71}.

The construction in Section \ref{sect:kronecker} uses manipulations of the signature matrix of an ETF to get a new signature matrix in a higher dimension. In contrast, the construction in Section \ref{sect:power} uses manipulations of the core matrix of an ETF.

\section{Kronecker multiplication construction} \label{sect:kronecker}

We call a pair $(X,Y)$ of $N\times N$ complex matrices \textit{ETF-amicable} if $X$ is a complex Hermitian Hadamard matrix, $Y$ is a signature $c$-matrix, and 
\begin{equation}\label{eq:amic}
cX=k N I_N + XY+YX
\end{equation}
for some real constant $k$. We use ETF-amicable pairs to obtain a multiplying construction for ETFs. By $\otimes$ below we mean the Kronecker product of matrices.

\begin{theorem}\label{thm:mult}
Let $(X,Y)$ be an ETF-amicable pair of $N\times N$ matrices satisfying condition (\ref{eq:amic}) above and let $S$ be a signature $(k, M)$-matrix of arbitrary order $M$. Then $B=S\otimes X + I_M \otimes Y$ is a signature $(c, MN)$-matrix.
\end{theorem}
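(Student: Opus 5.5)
The plan is to verify directly the three conditions of Proposition \ref{prop:signature} for $B=S\otimes X+I_M\otimes Y$, of order $MN$.

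\emph{Hermitian property.} Since $S$, $X$, $Y$ are Hermitian and $(A\otimes C)^*=A^*\otimes C^*$, we get $B^*=B$ at once.

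\emph{Entry structure.} View $B$ as an $M\times M$ block matrix with $N\times N$ blocks. The off-diagonal block in position $(i,j)$, $i\neq j$, is $s_{ij}X$. Since $|s_{ij}|=1$ and $X$ is a complex Hadamard matrix, all its entries are unimodular. The diagonal blocks are $s_{ii}X+Y=Y$, because $S$ has zero diagonal. So these blocks have zero diagonal and unimodular off-diagonal entries, since $Y$ is a signature matrix. Hence $B$ has zero diagonal and unimodular off-diagonal entries.

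\emph{Quadratic relation.} Expand using the mixed-product rule:
$$B^2=S^2\otimes X^2+S\otimes(XY+YX)+I_M\otimes Y^2.$$
We then substitute the known identities one at a time:
\begin{itemize}
\item $X^2=XX^*=NI_N$, because $X$ is Hermitian Hadamard;
\item $S^2=kS+(M-1)I_M$;
\item $Y^2=cY+(N-1)I_N$;
\item $XY+YX=cX-kNI_N$, by (\ref{eq:amic}).
\end{itemize}
This gives
$$B^2=kN\,S\otimes I_N+N(M-1)I_{MN}+c\,S\otimes X-kN\,S\otimes I_N+c\,I_M\otimes Y+(N-1)I_{MN}.$$
The terms $\pm kN\,S\otimes I_N$ cancel, which is exactly what the constant $k$ in the amicability condition is designed to achieve. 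What remains is
$$B^2=c(S\otimes X+I_M\otimes Y)+(MN-1)I_{MN}=cB+(MN-1)I.$$

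\emph{Consistency of $c$.} Proposition \ref{prop:signature} also requires that $c$ equal the value dictated by $MN$ and some dimension $d'$. I would handle this by noting that a Hermitian matrix satisfying $B^2=cB+(MN-1)I$ has two eigenvalues $\lambda_\pm$ with $\lambda_+\lambda_-=-(MN-1)$. Then $I+\alpha B$ is positive semidefinite of rank $d'$ for the appropriate $\alpha$, where $d'$ is the multiplicity of the top eigenvalue, and the formula for $c$ then follows from the trace condition $\operatorname{tr}B=0$. This is the same reasoning implicit in calling any such matrix a $(c,MN)$-matrix, as the paper does right after Proposition \ref{prop:signature}. The only subtle point is making sure the multiplicity is an integer. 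This is automatic, since the multiplicity of an eigenvalue of an actual matrix is an integer.

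The main obstacle is therefore just keeping the Kronecker bookkeeping correct, and in particular seeing that the $kN\,S\otimes I_N$ terms cancel. Everything else is routine.
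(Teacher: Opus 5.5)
Your proposal is correct and follows the same route as the paper. You check the Hermitian property and the entry pattern blockwise, then expand $B^2$ with the mixed-product rule and substitute $X^2=NI_N$, the quadratic relations for $S$ and $Y$, and (\ref{eq:amic}), so that the $kN\,S\otimes I_N$ terms cancel. Your extra paragraph, which uses the eigenvalue multiplicities and $\operatorname{tr}B=0$ to confirm that $c$ corresponds to an integer dimension, is also correct; it makes explicit a point the paper leaves implicit when it appeals to Proposition \ref{prop:signature} at the end.
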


\begin{proof}
$B$ is Hermitian because all matrices $S, X, I_M, Y$ are Hermitian. Clearly, $B$ has zero diagonal entries and unimodular off-diagonal entries. Using the definition of Hadamard matrix and Proposition \ref{prop:signature} for signature matrices we have the following three conditions.

\begin{align*}
X^2=NI_N\\
Y^2=cY+(N-1)I_N\\
S^2=kS+(M-1)I_M
\end{align*}

 It remains to check the quadratic condition on $B$.

$$B^2=(S\otimes X + I_M \otimes Y)^2=S^2\otimes X^2 + S\otimes (XY+YX) + I_M \otimes Y^2=$$

$$ (kS+(M-1)I_M) \otimes NI_N + S\otimes (XY+YX) + I_M\otimes (cY+(N-1)I_N)=$$

$$(MN-1)I_M\otimes I_N + S\otimes (kNI_N+XY+YX)+c I_M\otimes Y =$$

$$(MN-1)I_{MN} + S\otimes cX + c I_M\otimes Y = (MN-1)I_{MN} + cB.$$
In the last line we use condition (\ref{eq:amic}) from the definition of ETF-amicable pairs. By Proposition \ref{prop:signature}, $B$ is a signature $c$-matrix.
\end{proof}

\subsection{ETF-amicable pairs of order $2$}

Here we show that for ETF-amicable pairs of order 2, the construction from Theorem \ref{thm:mult} is equivalent to the doubling construction of Fallon and Iverson. Indeed, a complex Hermitian Hadamard matrix $X$ of order 2 is necessarily $\begin{pmatrix} \pm 1&\alpha\\ \overline{\alpha}&\mp 1\end{pmatrix}$, where $|\alpha|=1$. A signature matrix of order 2 is necessarily $\begin{pmatrix} 0&\beta\\ \overline{\beta}&0\end{pmatrix}$, where $|\beta|=1$ and $c$ must be 0. Checking the amicability condition, we see that any matrices of this kind work for $k=-\Re (\alpha\overline{\beta})$ which may take any value from $[-1,1]$.

\begin{cor}\cite[Theorem 6]{FI25}
If there is a complex ETF of size $d\times N$ with a signature $k$-matrix, $|k|\leq 1$, then there is a complex ETF of size $N\times 2N$.
\end{cor}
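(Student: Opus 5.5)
We plan to apply Theorem \ref{thm:mult} with $N=2$, taking as $S$ the given signature $(k,M)$-matrix of the $d\times M$ ETF; here we rename the corollary's $N$ as $M$ to avoid a clash with the order of the pair. Everything then reduces to producing an ETF-amicable pair $(X,Y)$ of order $2$ with $c=0$ and the prescribed constant $k$.

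\textbf{Step 1: choose the pair.} Take
$$X=\begin{pmatrix}1&\alpha\\ \overline{\alpha}&-1\end{pmatrix},\qquad Y=\begin{pmatrix}0&1\\ 1&0\end{pmatrix},$$
with $|\alpha|=1$ to be fixed. Then $X$ is Hermitian with unimodular entries and $X^2=2I_2$, so it is a complex Hermitian Hadamard matrix. Also $Y$ is Hermitian with zero diagonal and unimodular off-diagonal entries, and $Y^2=I_2=0\cdot Y+(2-1)I_2$. By Proposition \ref{prop:signature}, $Y$ is a signature $(0,2)$-matrix; this matches the formula for $c$, since $N=2d$ with $d=1$.

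\textbf{Step 2: verify condition (\ref{eq:amic}).} With $c=0$ the condition reads $XY+YX=-2kI_2$. A direct computation gives
$$XY=\begin{pmatrix}\alpha&1\\ -1&\overline{\alpha}\end{pmatrix},\qquad YX=\begin{pmatrix}\overline{\alpha}&-1\\ 1&\alpha\end{pmatrix},$$
so $XY+YX=2\Re(\alpha)\,I_2$. Hence the condition holds with $k=-\Re(\alpha)$, in agreement with the formula $k=-\Re(\alpha\overline{\beta})$ from the preceding discussion with $\beta=1$. Since $|k|\le 1$, we may choose $\alpha=-k+i\sqrt{1-k^2}$, which is unimodular and gives $\Re(\alpha)=-k$ exactly.

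\textbf{Step 3: conclude.} Theorem \ref{thm:mult} shows that $B=S\otimes X+I_M\otimes Y$ is a signature $(0,2M)$-matrix. By Proposition \ref{prop:signature}, a $(0,2M)$-matrix is the signature matrix of an ETF with $2M-2d'=0$, so $d'=M$ and we obtain an $M\times 2M$ ETF, as claimed.

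There is essentially no obstacle here. The only point needing care is the range condition: the constant $k$ attainable from order-$2$ pairs is $-\Re(\alpha)$ with $|\alpha|=1$, so it covers exactly $[-1,1]$. This is precisely why the hypothesis $|k|\le 1$ is required.
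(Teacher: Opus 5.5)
Your proposal is correct and follows the paper's argument: apply Theorem \ref{thm:mult} with an order-$2$ ETF-amicable pair. The paper notes that any $X=\begin{pmatrix}\pm1&\alpha\\ \overline{\alpha}&\mp1\end{pmatrix}$ and $Y=\begin{pmatrix}0&\beta\\ \overline{\beta}&0\end{pmatrix}$ work with $c=0$ and $k=-\Re(\alpha\overline{\beta})\in[-1,1]$; you fix $\beta=1$ and choose $\alpha=-k+i\sqrt{1-k^2}$ explicitly, which is the same construction.
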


\begin{remark}
It may seem that the construction from Theorem \ref{thm:mult} is richer than the one in \cite{FI25} because it can use infinitely many pairs $(\alpha,\beta)$ for a given $k$. However, constructions for different pairs with the same $\alpha\overline{\beta}$ are just Seidel equivalent to each other.
\end{remark}

\subsection{ETF-amicable pairs and amicable pairs of Hadamard matrices}

Since we are particularly interested in the case of $d\times 2d$ ETFs, for the following result we take $c=k=0$ in Theorem \ref{thm:mult}.

\begin{cor}\label{cor:mult}
Let $X$ be a complex Hermitian Hadamard matrix of order $2N$ and $Y$ be a signature matrix of an $N\times 2N$ ETF.  Also assume $XY+YX=0$. For any $d$, if there exists a $d\times 2d$ ETF, then there exists a $2dN \times 4dN$ ETF.
\end{cor}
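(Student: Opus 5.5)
This follows directly from Theorem \ref{thm:mult} with $c=k=0$. The plan is to read the hypotheses as saying that $(X,Y)$ is an ETF-amicable pair of order $2N$, take $S$ to be the signature matrix of the given $d\times 2d$ ETF, and then convert the resulting $(0,4dN)$-matrix back into an ETF of the claimed size.

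\emph{Step 1: $Y$ is a $0$-matrix.} For an ETF of size $N\times 2N$, Proposition \ref{prop:signature} gives
$$c=(2N-2N)\sqrt{\frac{2N-1}{N\cdot N}}=0.$$
So $Y$ is a signature $(0,2N)$-matrix, and $Y^2=(2N-1)I_{2N}$.

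\emph{Step 2: $(X,Y)$ is ETF-amicable.} With $c=0$ and $k=0$, condition (\ref{eq:amic}) reads $0=0\cdot 2N\, I_{2N}+XY+YX$. This is exactly the assumption $XY+YX=0$. Since $X$ is a complex Hermitian Hadamard matrix of order $2N$, the pair $(X,Y)$ is ETF-amicable with $c=k=0$.

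\emph{Step 3: the second factor.} Let $S$ be the signature matrix of the given $d\times 2d$ ETF. By the same computation as in Step 1, its parameter is $(2d-2d)\sqrt{\cdot}=0$. So $S$ is a signature $(0,2d)$-matrix, i.e.\ a $(k,M)$-matrix with $k=0$ and $M=2d$.

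\emph{Step 4: apply Theorem \ref{thm:mult}.} The theorem shows that
$$B=S\otimes X+I_{2d}\otimes Y$$
is a signature $(0,4dN)$-matrix.

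\emph{Step 5: recover the dimension.} By Proposition \ref{prop:signature}, $B$ is the signature matrix of a $D\times 4dN$ ETF for some $D$, where
$$0=(4dN-2D)\sqrt{\frac{4dN-1}{D(4dN-D)}}.$$
This forces $D=2dN$, so we obtain a $2dN\times 4dN$ ETF.

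The only step needing care is this last one, translating $c=0$ back into the dimension. It needs $1\le D<4dN$, so that the square root is defined and nonzero, which holds here. Proposition \ref{prop:signature} is stated as an equivalence for a given $d$. The converse direction therefore needs a little care: a Hermitian matrix with zero diagonal, unimodular off-diagonal entries and $B^2=(4dN-1)I$ has eigenvalues $\pm\sqrt{4dN-1}$. Since its trace is zero, each eigenvalue has multiplicity $2dN$. Then $I+\alpha B$ with $\alpha=1/\sqrt{4dN-1}$ is a positive semidefinite Gram matrix of rank $2dN$. Its vectors live in $\mathbb{C}^{2dN}$ and meet the Welch bound with equality, as required.
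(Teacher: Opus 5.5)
Your proposal is correct and matches the paper's proof: the paper also treats this as an immediate consequence of Theorem~\ref{thm:mult} with $c=k=0$, with $S$ the signature matrix of the $d\times 2d$ ETF (so $M=2d$ and $B$ has order $4dN$). Your spectral check that a $(0,4dN)$-matrix yields a $2dN\times 4dN$ ETF is a correct detail that the paper leaves implicit.
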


\begin{proof}
This is an immediate consequence of Theorem \ref{thm:mult} for $c=k=0$ and the ETF-amicable pair $(X,Y)$.
\end{proof}

The next corollary provides a particular class of ETF-amicable pairs for which Corollary \ref{cor:mult} works by connecting it to amicable pairs of Hadamard matrices defined in Section \ref{sect:hadamard}.

\begin{theorem}\label{thm:amic}
If there exists an amicable pair of $2N\times 2N$ Hadamard matrices and a $d\times 2d$ ETF, then there exists a $2dN\times 4dN$ ETF.  
\end{theorem}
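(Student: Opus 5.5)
The plan is to reduce the statement to Corollary~\ref{cor:mult}. Given an amicable pair $(P,Q)$ of real Hadamard matrices of order $2N$, I would produce a complex Hermitian Hadamard matrix $X$ and a signature matrix $Y$ of a $N\times 2N$ ETF satisfying $XY+YX=0$. Here $P$ is symmetric, $Q=C+I$ with $C$ skew-symmetric, and $PQ^{\text{T}}=QP^{\text{T}}$.

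The natural candidates come from the conference-matrix constructions of Section~\ref{sect:hadamard}. Set $X=P$, which is real symmetric with $\pm1$ entries and $P^2=PP^{\text{T}}=2NI$, so it is a Hermitian Hadamard matrix. Set $Y=iC$. To see that $Y$ is a signature matrix, note that $C$ is a skew-symmetric conference matrix, because $QQ^{\text{T}}=(I+C)(I-C)=I-C^2=2NI$ gives $C^2=-(2N-1)I$. Hence $Y$ is Hermitian with zero diagonal, unimodular off-diagonal entries, and $Y^2=-C^2=(2N-1)I$. By Proposition~\ref{prop:signature} with $c=0$, $Y$ is the signature matrix of an $N\times 2N$ ETF.

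The key step, and the only potential obstacle, is extracting $PC+CP=0$ from the amicability condition. I expand $PQ^{\text{T}}=QP^{\text{T}}$ using $P^{\text{T}}=P$ and $Q^{\text{T}}=I-C$. This gives $P(I-C)=(I+C)P$, that is $P-PC=P+CP$. Therefore $PC+CP=0$, and multiplying by $i$ gives $XY+YX=0$.

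With these choices the hypotheses of Corollary~\ref{cor:mult} hold. Combining it with the given $d\times 2d$ ETF yields a $2dN\times 4dN$ ETF. Equivalently, Theorem~\ref{thm:mult} with $c=k=0$ applies to $B=S\otimes P+I_{2d}\otimes iC$, where $S$ is the signature matrix of the $d\times 2d$ ETF. I expect the whole argument to be a short verification, with the sign bookkeeping in the amicability identity as the only place needing care.
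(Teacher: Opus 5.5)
Your proposal is correct and follows the paper's proof: take $X$ to be the symmetric Hadamard matrix and $Y=iC$, where $C$ is the skew-symmetric part of the skew Hadamard matrix, derive $XY+YX=0$ from the amicability identity, and apply Corollary~\ref{cor:mult}. Your expansion $P(I-C)=(I+C)P$ gives the anticommutation directly, whereas the paper routes it through the symmetry of $AC^{\text{T}}$; the two are equivalent.
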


\begin{proof}
Assume $(A,B)$ is an amicable pair of Hadamard matrices. By the definition of amicable pairs, $A^{\text{T}}=A$, $B=I+C$, where $C^{\text{T}}=-C$, and $AC^{\text{T}}$ is symmetric. We take $X=A$ and $Y=iC$. $X$ is a symmetric real matrix so it is Hermitian. $Y$ is a signature matrix of an $N\times 2N$ ETF as explained in Section \ref{sect:hadamard}. It remains to check the condition $XY+YX=0$.

$$XY+YX=i(AC+CA)=i(-AC^{\text{T}} + (A^{\text{T}}C^{\text{T}})^{\text{T}})=$$

$$i(-AC^{\text{T}} + (AC^{\text{T}})^{\text{T}}) = i(-AC^{\text{T}} + AC^{\text{T}}) = 0.$$
\end{proof}

The smallest nontrivial example has order 2: $A=\begin{pmatrix} 1&1\\1&-1\end{pmatrix}$, $B=\begin{pmatrix} 1&1\\-1&1\end{pmatrix}$. Using this pair we recover the doubling construction. There are, however, many other examples of amicable pairs including infinite series of various types, and all of them can be used for the multiplying construction of ETFs. In the following corollary we use the list of known amicable pairs from \cite{S13}.

\begin{cor}\label{cor:amic_concrete}
Let $N$ be one of the following:

\begin{itemize}

\item $2^\ell$;

\item $q+1$, where $q$ is a prime power, $q\equiv 3\pmod 4$;

\item $(4t-1)^{2r+1}+1$, when circulant Hadamard cores of order $4t-1$ exist;

\item A product of any of the numbers above.

\end{itemize}

Assume there exists a $d\times 2d$ ETF. Then there exists a $dN\times 2dN$ ETF.
\end{cor}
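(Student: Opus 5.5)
The corollary should follow from Theorem \ref{thm:amic} once we know that an amicable pair of Hadamard matrices exists in each order $N$ on the list. Note the indexing: Theorem \ref{thm:amic} takes an amicable pair of order $2N'$ and a $d\times 2d$ ETF and produces a $2dN'\times 4dN'$ ETF. Setting $N=2N'$, this is a $dN\times 2dN$ ETF. So it suffices to show that for each listed $N$ there is an amicable pair $(A,B)$ of order $N$. In every case $N$ is even ($2^\ell$ with $\ell\ge1$, $q+1$ with $q$ odd, $(4t-1)^{2r+1}+1$ even), so $N'=N/2$ is an integer. The case $N=1$, i.e.\ $\ell=0$, is trivial and can be excluded.

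\emph{Base orders.} For $N=q+1$ with $q\equiv 3\pmod 4$ a prime power, I would cite the classical construction of amicable pairs from the Paley skew conference matrix, as listed in \cite{S13}. Take the skew Paley core $Q$ built from the quadratic character of $\mathbb{F}_q$ and border it to get the skew conference matrix $C$, so that $B=I+C$. Then take $A$ to be the symmetric matrix obtained from $C$ by multiplying by the back-circulant permutation $x\mapsto -x$ and adjusting the border signs. The third family, $(4t-1)^{2r+1}+1$ given circulant Hadamard cores, is also in the list of \cite{S13}, and I would simply quote it. The order $2$ pair is given explicitly just before the corollary.

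\emph{Products.} The key step is closure under products, since this also gives $2^\ell$ from the order $2$ pair. Given amicable pairs $(A_1,B_1)$ of order $N_1$ and $(A_2,B_2)$ of order $N_2$, write $B_j=I+C_j$ with $C_j$ skew. I would define
$$A=A_1\otimes A_2,\qquad B=B_1\otimes A_2' \text{-type combination},$$
and then follow the standard Wallis formula
$$B=I\otimes C_2' + C_1\otimes A_2,$$
adjusted so that the diagonal is $I$. This normalization is where care is needed. The cleaner route is to use the known product theorem for amicable Hadamard pairs (Wallis), which is recorded in \cite{S13}: if amicable pairs exist in orders $N_1,N_2$, they exist in order $N_1N_2$.

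Alternatively, avoiding that theorem, one can iterate Theorem \ref{thm:amic} directly. A $d\times 2d$ ETF and a pair of order $N_1$ give a $dN_1\times 2dN_1$ ETF. Applying the theorem again with the pair of order $N_2$ gives a $dN_1N_2\times 2dN_1N_2$ ETF. This iteration handles products, including $2^\ell$, with no extra algebra, and I would take this route.

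The main obstacle is only bibliographic: the existence of amicable pairs in the second and third families must be correctly attributed to \cite{S13}. The remaining steps are the index bookkeeping $N=2N'$ and an induction on the number of factors.
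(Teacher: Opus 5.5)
Your proposal is correct, and its core matches the paper's: apply Theorem \ref{thm:amic} to an amicable pair of order $N$ (so $2N'=N$ in the theorem's notation) taken from the list in \cite{S13}. You diverge only in the product case. The paper takes products as already covered by the cited list of amicable orders, which rests on the standard product theorem for amicable pairs. In your notation that theorem uses the pair $A_1\otimes A_2$, $I\otimes B_2+C_1\otimes A_2$. So the formula you began to write works once $C_2'$ is replaced by $B_2$, and no further normalization is needed. You instead avoid amicable pairs of composite order altogether by feeding the $dN_1\times 2dN_1$ ETF back into Theorem \ref{thm:amic}. This is legitimate because that theorem holds for every $d$.

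Your route is more self-contained: it needs amicable pairs only in the base orders, and it gives $2^\ell$ directly from the order-$2$ pair. The paper's route gives a slightly stronger intermediate fact: an ETF-amicable pair, and hence a single Kronecker step, exists in every product order.

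There is one small slip in your Paley sketch. The symmetric partner must be built from the Hadamard matrix $B=I+C$, not from $C$, since $C$ times a permutation still has zero entries. With the border $\begin{pmatrix}0&\mathbf 1^{\text T}\\-\mathbf 1&Q\end{pmatrix}$, you can take $A=B\,\mathrm{diag}(-1,R)$, where $R$ is the permutation matrix of $x\mapsto -x$. Amicability is then automatic, because $AB^{\text T}=B\,\mathrm{diag}(-1,R)\,B^{\text T}$ is symmetric. Since you cite \cite{S13} for this case anyway, the slip does not affect the proof.
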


\subsection{ETF-amicable family for $N=6$}\label{subsect:6}

The following family of ETF-amicable pairs $(X(k), Y(k))$ is based on a one-parameter family of ETFs of size 6 from \cite{ET16}, which provides the matrices $Y(k)$ in the family. The corresponding matrices $X(k)$ were found computationally with AI assistance.

Let $0\leq k\leq 4/3$, and define
\[
u=-\frac{3k}{4}-i\sqrt{1-\frac{9k^2}{16}},\qquad h=1-\frac{3k}{2},\qquad \delta=\sqrt{2-h^2},
\]
\[
w=\frac{\delta-h}{2}+i\frac{\delta+h}{2}.
\]
Define
\[
X(k)=
\begin{pmatrix}
-1&1&-u&-u&-1&i\\
1&-1&-u&-u&-i&1\\
-\bar u&-\bar u&1&-i&-\bar u&\bar u\\
-\bar u&-\bar u&i&-1&\bar u&-\bar u\\
-1&i&-u&u&1&1\\
-i&1&u&-u&1&1
\end{pmatrix},
\]
\[
Y(k)=
\begin{pmatrix}
0&-1&-1&-1&w&w\\
-1&0&-1&-1&-w&-w\\
-1&-1&0&1&-1&1\\
-1&-1&1&0&1&-1\\
\bar w&-\bar w&-1&1&0&-1\\
\bar w&-\bar w&1&-1&-1&0
\end{pmatrix}.
\]

\begin{cor}\label{cor:6}
Let $0\leq k\leq 4/3$ and let $S$ be a signature $(k, M)$-matrix. Then there exists a $d\times 2d$ ETF for $d=3M$.
\end{cor}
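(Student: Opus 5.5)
The plan is to apply Theorem \ref{thm:mult} with the pair $(X,Y)=(X(k),Y(k))$ and $N=6$, $c=0$. If $(X(k),Y(k))$ is ETF-amicable with $c=0$ and parameter $k$, then for a signature $(k,M)$-matrix $S$ the matrix $B=S\otimes X(k)+I_M\otimes Y(k)$ is a signature $(0,6M)$-matrix. By Proposition \ref{prop:signature}, $c=0$ forces $6M=2d$, so $B$ gives a $d\times 2d$ ETF with $d=3M$. Everything therefore reduces to three verifications for $0\le k\le 4/3$:
\begin{enumerate}[(i)]
\item $X(k)$ is a Hermitian complex Hadamard matrix, so $X(k)^2=6I$;
\item $Y(k)$ is a signature $0$-matrix, so $Y(k)^2=5I$;
\item $X(k)Y(k)+Y(k)X(k)=-6k\,I$.
\end{enumerate}
Condition (iii) is exactly (\ref{eq:amic}) with $c=0$ and $N=6$.

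First I check that the parameters are well defined and unimodular. Since $k\le 4/3$, we have $9k^2/16\le 1$, so $u$ is defined and $|u|^2=9k^2/16+1-9k^2/16=1$. Next, $h=1-3k/2\in[-1,1]$, so $2-h^2\ge 1$ and $\delta$ is real. Then $|w|^2=\bigl((\delta-h)^2+(\delta+h)^2\bigr)/4=(\delta^2+h^2)/2=1$. Hermiticity of $X(k)$ and $Y(k)$ is read off entrywise: the diagonal of $X$ is real, the diagonal of $Y$ is zero, and conjugate entries sit in transposed positions. For example, $X_{12}=X_{21}=1$ and $X_{16}=i=\overline{X_{61}}$.

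For (i) and (ii) I compute the inner products of distinct rows. This is routine and uses only $|u|=|w|=1$. For instance, rows $1$ and $2$ of $X$ give $-1-1+|u|^2+|u|^2+(-1)(\overline{-i})+i\cdot 1=0$. In $Y$, the off-diagonal entries $Y_{ij}$ for $i,j\ge 3$ are $\pm1$ and $w$ enters only through blocks of the form $\pm w$, so all $w$-dependence cancels as $w\bar w=1$ or as $\pm w$ pairs.

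The main obstacle is (iii). I will expand $XY+YX$ entry by entry, exploiting the $2\times 2$ block structure on index pairs $\{1,2\},\{3,4\},\{5,6\}$, which cuts the work roughly in half, and I will use the Hermitian symmetry of $XY+YX$. The off-diagonal entries should vanish, which will need identities such as $u+\bar u=-3k/2$ and relations linking $w$ to $h$: $w+\bar w=\delta-h$, $i(\bar w-w)=\delta+h$, and hence $\Re(w)-\Im(w)=-h$. The diagonal entries must all equal $-6k$. As a sample, $(XY+YX)_{11}=2\Re(\text{row}_1(X)\cdot\text{col}_1(Y))$ is a combination of $\Re u$ and $\Re(w)-\Im(w)$, and this should reduce to $-6k$ using $h=1-3k/2$. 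I expect these relations to be exactly what the definitions of $h$ and $w$ were designed to satisfy. If some entry resists, I would recheck it with $k=0$ (where $u=-i$, $h=1$, $\delta=1$, $w=i$) and $k=4/3$ as sanity cases before completing the general symbolic computation.
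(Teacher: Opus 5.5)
Your proposal is correct and follows the paper's proof: you verify that $(X(k),Y(k))$ is an ETF-amicable pair with $c=0$ by checking $X^2=6I$, $Y^2=5I$ and $XY+YX=-6kI$, then apply Theorem \ref{thm:mult} with $N=6$ to get a signature $(0,6M)$-matrix, i.e.\ a $3M\times 6M$ ETF. The entrywise computation you left hedged does go through with exactly the identities you list: the off-diagonal entries of $XY+YX$ all cancel (the only nontrivial ones, $(1,2)$ and $(5,6)$, reduce to $\pm 2(1-\tfrac{3k}{2}-h)=0$), and the diagonal entries are $2(-1+2\Re u+h)$ in positions $1,2,5,6$ and $8\Re u$ in positions $3,4$, both equal to $-6k$.
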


\begin{proof}
Both $X(k)$ and $Y(k)$ are Hermitian and satisfy unimodularity conditions because $|u|=|w|=1$. Explicitly checking $X(k)^2=6I$, $Y(k)^2=5I$, $X(k)Y(k)+Y(k)X(k)=-6kI$, we see that $(X(k),Y(k))$ is an ETF-amicable pair for $c=0$ and arbitrary $k\in [0,4/3]$.
\end{proof}

The following result is analogous to the construction of Fallon and Iverson based on ETFs found by Strohmer \cite{S08}.

\begin{theorem}\label{thm:3}
If there exists a skew Hadamard matrix of order $N$, then there exists a $3(N-1)\times 6(N-1)$ ETF.
\end{theorem}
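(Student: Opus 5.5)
The plan is to combine Corollary~\ref{cor:6} with Strohmer's construction of ETFs from skew Hadamard matrices. This mirrors how Fallon and Iverson combined that construction with their doubling procedure. Let $H$ be a skew Hadamard matrix of order $N$. As recalled in Section~\ref{sect:hadamard}, Strohmer \cite{S08,R07} produces from $H$ an $(N/2-1)\times(N-1)$ ETF. Take its signature matrix $S$, of order $M=N-1$, with parameter $k$. Once we know $0\le k\le 4/3$, Corollary~\ref{cor:6} applied to $S$ gives a $d\times 2d$ ETF with $d=3M=3(N-1)$, which is exactly the claim.

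So the only real work is to compute $k$ for this ETF, and to fix its sign if necessary. I would do this directly from Proposition~\ref{prop:signature}, with ETF dimension $d'=N/2-1$ and size $M=N-1$. Then $M-2d'=1$ and $M-d'=N/2$, so
$$k=(M-2d')\sqrt{\frac{M-1}{d'(M-d')}}=\sqrt{\frac{N-2}{(N/2-1)(N/2)}}=\frac{2}{\sqrt N}.$$
For $N\ge 4$ this gives $0<k\le 1\le 4/3$. The case $N=2$ (or $N=1$) is degenerate: there $d'=0$, and it should be excluded or handled separately, since the statement is only meaningful for $N\ge4$.

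It is also worth recording the explicit signature matrix, so that we do not depend solely on the cited existence result. Normalize $H=I+C$ so that the first row of $C$ is all ones off the diagonal; this is possible by negating rows and the corresponding columns, which preserves skewness. Let $Q$ be the $(N-1)\times(N-1)$ core of $C$. Then $Q$ is skew-symmetric with $QJ=JQ=0$ and $QQ^{\mathrm T}=(N-1)I-J$. The matrix $S=\pm iQ$ is Hermitian with unimodular off-diagonal entries. Using $Q^2=J-(N-1)I$, one needs a combination with $S^2=kS+(M-1)I$. I expect the right object to be $S=(J-I)/\sqrt{\,\cdot\,}$-type mixing, namely $S=aQ+b(J-I)$ with $|a|^2+b^2=1$, $a$ purely imaginary. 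Solving the quadratic condition should give real $b=\pm1/\sqrt N$, and hence $k=\pm 2/\sqrt N$.

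Since Proposition~\ref{prop:signature} makes the sign of $k$ a feature of the frame, not a choice, the main obstacle is confirming the sign $k>0$ so that Corollary~\ref{cor:6}, which requires $k\in[0,4/3]$, applies. The formula above already shows $k=+2/\sqrt N$ for the $(N/2-1)\times(N-1)$ frame. If one instead obtained the Naimark complement with $k<0$, then $-S$ (the signature of the complementary ETF) has parameter $-k$, so either way a signature matrix with $k=2/\sqrt N\in(0,1]$ exists. With that settled, the conclusion follows directly from Corollary~\ref{cor:6}.
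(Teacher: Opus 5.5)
Your proposal is correct and is the paper's argument: Strohmer's $(N/2-1)\times(N-1)$ ETF has signature parameter $k=2/\sqrt N\in(0,1]$, so Corollary~\ref{cor:6} with $M=N-1$ gives the $3(N-1)\times 6(N-1)$ ETF. Your explicit computation of $k$ and of the Renes--Strohmer matrix $aQ+b(J-I)$ (with $b=-1/\sqrt N$ giving $k=+2/\sqrt N$) goes beyond the paper's one-line proof. Also, the case $N=2$ need not be excluded, since $Y(k)$ is itself the signature matrix of a $3\times 6$ ETF.
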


\begin{proof}
The construction of Strohmer provides an ETF of size $(N/2-1)\times (N-1)$ with signature $2/\sqrt{N}$-matrix. Therefore, Corollary \ref{cor:6} is applicable and a $d\times 2d$ ETF for $d=3(N-1)$ exists.
\end{proof}

Combining this result with the Paley construction, we verify Conjecture \ref{conj:double} for all $d=3q$, where $q$ is a prime power congruent to $3\pmod 4$. This immediately leads to an explicit construction for $d=93$ that was missing in \cite{IJM24}. The other missing explicit case in \cite{IJM24}, $d=105$, can be obtained from a skew Hadamard matrix of order 36 found in \cite{GS70}.

\subsection{ETF-amicable pairs of order $q+1$ for prime powers congruent to $1\pmod 4$}\label{subsect:1mod4}

The construction in this subsection was first found numerically for $N=10$ by AI and then generalized to all $N=q+1$, where $q$ is a prime power congruent to $1\pmod 4$.

Let $q$ be a prime power such that $q\equiv 1\pmod 4$ and $N=q+1$. For the construction we need a concrete description of the Paley construction. It is convenient to explain it in terms of the projective line $\mathbb P^1(\mathbb F_q)=\{\infty\}\cup\mathbb F_q$. Let $\chi$ be the quadratic character in $\mathbb F_q$, that is, $\chi(0)=0$, $\chi(b^2)=1$ for $b\neq 0$, $\chi(a)=-1$ if $a$ is a nonsquare. We index all rows and columns of a matrix $Y$ by the elements of $\mathbb P^1(\mathbb F_q)$ and define $Y_{\infty,\infty}=0$, $Y_{x,\infty}=Y_{\infty,x}=1$ for $x\neq\infty$, $Y_{x,y}=\chi(x-y)$ for $x,y\neq\infty$. Since $-1$ is a square for $q\equiv 1\pmod 4$, the matrix $Y$ is symmetric. It is also straightforward to check that $Y^2=(N-1)I$. The following lemma explains the construction of a matrix $X$ forming an ETF-amicable pair with $Y$.

\begin{lem}\label{lem:1mod4}
Let $q\equiv1\pmod 4$ be a prime power, and let $Y$ be the symmetric Paley conference matrix of order $N=q+1$. Then there is a Hermitian complex Hadamard matrix $X$ of order $N$ such that $XY+YX=0$.
\end{lem}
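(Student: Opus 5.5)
The plan is to obtain $X$ from a monomial matrix that reverses the sign of $Y$, rather than to search directly for an anticommuting Hermitian Hadamard matrix. Suppose $Q$ is a monomial matrix with unimodular nonzero entries such that $QYQ^{-1}=-Y$, i.e. $QY=-YQ$. For any scalars $a,b$ put $X=(aI+bY)Q$. Then
$$XY+YX=aQY+bYQY+aYQ+bY^2Q=b(-Y^2Q+Y^2Q)=0,$$
so anticommutation holds automatically. Moreover $X^2=(aI+bY)(aI-bY)Q^2=(a^2-b^2q)Q^2$, using $Y^2=qI$. So if $Q^2=\lambda I$, the Hadamard condition becomes the single scalar equation $\lambda(a^2-qb^2)=N=q+1$. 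The proof therefore reduces to three tasks: building $Q$, then checking unimodularity of the entries, then checking Hermiticity.

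\emph{Step 1: the monomial antiautomorphism.} Fix a nonsquare $\eta\in\mathbb F_q$ and use the involution $\sigma(x)=\eta/x$ of $\mathbb P^1(\mathbb F_q)$, with $\sigma(0)=\infty$ and $\sigma(\infty)=0$. It has no fixed points, since $x^2=\eta$ is unsolvable. We have
$$\chi(\sigma x-\sigma y)=\chi(\eta)\chi(y-x)\chi(x)\chi(y)=-\chi(x)\chi(y)\chi(x-y),$$
using $\chi(-1)=1$. The entries in the $\infty$ row and column transform in the same way after inserting the signs $\chi(x)$. Hence for a suitable diagonal $\pm1$ matrix $D$ (essentially $\chi(x)$, with appropriate values at $0$ and $\infty$) the matrix $Q=DP_\sigma$ satisfies $QYQ^{T}=-Y$. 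I would verify the rows and columns through $0$ and $\infty$ separately, since there the Paley entries come from the bordering rather than from $\chi$. Because $\sigma$ is an involution, $Q^2$ is diagonal with entries $\pm1$. If $Q^2$ is not scalar, I would multiply $Q$ by a diagonal of phases, for instance $i$ on one orbit class, to make $Q^2=\lambda I$ while keeping $QY=-YQ$.

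\emph{Step 2: unimodularity and Hermiticity.} This is the main obstacle. The entries of $X=(aI+bY)Q$ off the support of $Q$ equal $b$ times $\pm1$, so $|b|=1$ is forced. Since $\sigma$ has no fixed points, the support of $Q$ is off the diagonal. At position $(x,\sigma x)$ the entry is $aQ_{x,\sigma x}+bY_{x,\sigma x}$, where $Y_{x,\sigma x}=\pm1$. I will choose the phases of $Q$ and the value of $a$ so that $aQ_{x,\sigma x}=-2bY_{x,\sigma x}$ on every orbit, which flips the sign of that entry and keeps it unimodular. The entries $Y_{x,\sigma x}=\chi(x-\eta/x)$ vary with $x$, so this amounts to absorbing them into the free phases of $Q$ while keeping $QY=-YQ$ and $Q^2$ scalar. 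This is the delicate consistency check. For Hermiticity I need $Q^*(\bar aI+\bar bY)=(aI+bY)Q$; using $YQ=-QY$, this becomes a condition relating $Q^*$ to $Q$ and $(a,b)$ to $(\bar a,\bar b)$. I expect a choice such as $b=\pm i$ with purely imaginary or real $Q$ to make both sides match.

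\emph{Step 3: normalization.} Finally I impose $\lambda(a^2-qb^2)=q+1$. With $|a|=2$ forced by Step 2 and $b^2=\pm1$, this reads $\lambda(a^2\mp q)=q+1$. I will tune the sign of $b^2$, the phase of $a$, and $\lambda$ to satisfy it, for example $b^2=-1$ with $a^2=1$-type normalizations after rescaling. If the naive choice $X=(aI+bY)Q$ cannot meet all constraints simultaneously, the fallback is the more general ansatz $X=(aI+bY)Q+R$, where $R$ is a correction supported on $\{0,\infty\}$ that still anticommutes with $Y$. I would first test this ansatz numerically at $q=5$ and $q=9$ to fix the sign patterns before proving the general identity.
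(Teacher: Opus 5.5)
Your ansatz $X=(aI+bY)Q$ is the same one the paper uses. The paper takes the same involution $x\mapsto\eta/x$ (with $0\leftrightarrow\infty$), the signs $\varepsilon_x=\chi(x)$ for $x\neq0,\infty$, $\varepsilon_\infty=1$, $\varepsilon_0=-1$, and sets $X=iQ-YQ=(iI-Y)Q$, which is $a=i$, $b=-1$, $\lambda=-1$ in your notation. However, your Step~2 rests on a miscomputation that derails the rest. Because $Q$ is monomial, $[YQ]_{x,y}=Y_{x,\sigma y}\,Q_{\sigma y,y}$, so $[YQ]_{x,\sigma x}=Y_{x,x}Q_{x,\sigma x}=0$. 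Thus $YQ$ vanishes \emph{exactly} on the support of $Q$ and is unimodular everywhere else, including the diagonal since $\sigma$ is fixed-point-free. Hence $X_{x,\sigma x}=aQ_{x,\sigma x}$, not $aQ_{x,\sigma x}+bY_{x,\sigma x}$, and unimodularity simply means $|a|=|b|=1$. Your prescription $aQ_{x,\sigma x}=-2bY_{x,\sigma x}$ with $|a|=2$ would put entries of modulus $2$ on the support of $Q$, so it can never give a Hadamard matrix. The orbit-by-orbit ``consistency check'' you call the main obstacle does not exist, and Step~3 (where you suggest $a^2=1$) contradicts Step~2. As written, the proposal never fixes $a,b$ and ends with a numerical fallback, so it is not yet a proof.

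Once this is corrected, the argument closes quickly.
\begin{itemize}
\item Checking the rows and columns through $0$ and $\infty$ forces $\varepsilon_\infty=1$, $\varepsilon_0=-1$, given $\varepsilon_x=\chi(x)$ elsewhere.
\item Then $\varepsilon_x\varepsilon_{\sigma x}=\chi(x)\chi(\eta/x)=\chi(\eta)=-1$ for every $x$, which gives $Q^{\text{T}}=-Q$ and $Q^2=-I$ directly. No phase adjustment is needed, and your proposed diagonal phase twist would break $QY=-YQ$, since only scalar diagonal matrices commute with $Y$.
\item With $Q$ real and skew, $X^*=-\bar aQ+\bar bYQ$. Since $Q$ and $YQ$ have disjoint supports, $X^*=X$ holds iff $a\in i\mathbb R$ and $b\in\mathbb R$.
\item Choosing $a=i$, $b=-1$ gives unimodular entries, $X^*=X$, and $X^2=(a^2-qb^2)Q^2=(-1-q)(-I)=NI$.
\end{itemize}
This is precisely the paper's proof.
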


\begin{proof}
Choose an arbitrary nonsquare $a\in\mathbb F_q$, and define the fixed-point-free projective involution

\[
\tau(\infty)=0,\qquad \tau(0)=\infty,\qquad
\tau(x)=\frac{a}{x}\quad(x\neq 0, \infty).
\]

Define signs

\[
\varepsilon_\infty=1,\qquad \varepsilon_0=-1,\qquad
\varepsilon_x=\chi(x)\quad(x\neq 0, \infty).
\]

Let $Q$ be the signed permutation matrix whose rows and columns are indexed by $\mathbb P^1(\mathbb F_q)$ in the same order as for $Y$ and whose entries are determined by $Q_{x,\tau(x)}=\varepsilon_x$. Then

\[
Q^{\text{T}}=-Q,\qquad Q^2=-I,\qquad QY=-YQ.
\]
The first two identities follow immediately from $\varepsilon_x \varepsilon_{\tau(x)}=-1$. The last identity can be checked explicitly. For $x,y \neq 0,\infty$, $[QY]_{x,y}=\chi(a-xy)=-[YQ]_{x,y}$. The remaining cases are also straightforward. We note that, in particular, $[QY]_{x,\tau(x)}=0$ and all other entries of $QY$ are $\pm 1$.

Now we define $X=iQ-YQ$ and check that all conditions for an ETF-amicable pair are satisfied. From the observation above, $X$ has unimodular entries and
\[
X^* = -iQ^{\text{T}}-YQ = iQ-YQ = X,
\]

\[
X^2=-Q^2-iQYQ-iYQ^2+YQYQ = I + iYQ^2-iYQ^2 - Y^2Q^2 =NI,
\]
Finally,
\[
XY+YX = iQY-YQY+iYQ-Y^2Q = i(QY+YQ) - Y(QY+YQ) = 0.
\]
\end{proof}

\begin{remark}
Under certain conditions we can construct an ETF-amicable pair for any signature 0-matrix $Y$. In particular, if there exists a Hermitian monomial involution $M$ such that $MYM=-Y$, then we can use $X=(I+iY)M$. In Lemma \ref{lem:1mod4}, $iQ$ plays the role of $M$. A similar construction of $M$ works for prime powers $q\equiv3\pmod 4$. It is quite possible that the same approach works for all frames with some group structure such as harmonic frames \cite{FS20}.
\end{remark}

Combining Lemma \ref{lem:1mod4} with the multiplying construction of Theorem \ref{thm:mult}, we get the following result.

\begin{theorem}\label{thm:1mod4}
Let $N=q+1$, where $q$ is a prime power and $q\equiv 1\pmod 4$. If there exists a $d\times 2d$ ETF, then there exists a $dN\times 2dN$ ETF.
\end{theorem}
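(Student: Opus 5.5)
The plan is to apply Theorem \ref{thm:mult} with $c=k=0$, using the ETF-amicable pair supplied by Lemma \ref{lem:1mod4}. Let $Y$ be the symmetric Paley conference matrix of order $N=q+1$ described before Lemma \ref{lem:1mod4}.

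First I check that $Y$ is a signature $(0,N)$-matrix. It is real and symmetric, hence Hermitian. It has zero diagonal and $\pm1$ off-diagonal entries. It also satisfies $Y^2=(N-1)I$. So by Proposition \ref{prop:signature} it is a signature matrix with $c=0$, and since $c=0$ forces the frame size to be twice the dimension, it is the signature matrix of an $(N/2)\times N$ ETF.

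Next, Lemma \ref{lem:1mod4} provides a Hermitian complex Hadamard matrix $X$ of order $N$ with $XY+YX=0$. This is exactly condition (\ref{eq:amic}) with $c=0$ and $k=0$, since $0\cdot X = 0\cdot N I_N + XY+YX$. Hence $(X,Y)$ is an ETF-amicable pair with $c=k=0$.

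Now let $S$ be the signature matrix of the given $d\times 2d$ ETF. By Proposition \ref{prop:signature}, the parameter for $N'=2d$ is $(N'-2d)\sqrt{\cdots}=0$, so $S$ is a signature $(0,2d)$-matrix, i.e.\ a $(k,M)$-matrix with $k=0$ and $M=2d$. Theorem \ref{thm:mult} then says that
\[
B=S\otimes X+I_{2d}\otimes Y
\]
is a signature $(0,2dN)$-matrix. Reading off the dimension from $c=0$ in Proposition \ref{prop:signature}, with $c=0$ and order $2dN$, the frame dimension is $dN$. Therefore $B$ is the signature matrix of a $dN\times 2dN$ ETF.

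There is no real obstacle, since all the work is done in Lemma \ref{lem:1mod4}. The only point needing care is that the parameter $k$ of $S$ matches the $k$ in the amicability relation, and here both are $0$.
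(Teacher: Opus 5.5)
Your proposal is correct and follows the paper's argument: you feed the ETF-amicable pair $(X,Y)$ from Lemma \ref{lem:1mod4} (with $c=k=0$) and the signature $0$-matrix $S$ of the given $d\times 2d$ ETF into Theorem \ref{thm:mult}, exactly as the paper does. Your explicit checks — that $Y$ is a signature $(0,N)$-matrix, and that $c=0$ at order $2dN$ forces frame dimension $dN$ — spell out details the paper leaves implicit.
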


\section{Kronecker power construction}\label{sect:power}

The construction in this section is based on power constructions for conference matrices. For a skew-symmetric conference matrix of order $m+1$, the general problem is to find a skew-symmetric conference matrix of order $m^r+1$ for $r>1$. Goldberg \cite{G66} proved that a skew-symmetric conference matrix of order $m^3+1$ exists whenever a skew-symmetric conference matrix of order $m+1$ exists. Goethals and Seidel \cite{GS67} mentioned the earlier construction of Belevitch for $r=2$. Wallis \cite{W72} constructed similar matrices for $r=5$ and 7. Finally, Turyn \cite{T71} generalized these constructions to every $r$. The same machinery based on powers of cores is also used in the paper \cite{S13} already mentioned above.

In this section we show that the construction of Turyn works for signature matrices as well. The main idea is to modify a core of the signature matrix using Kronecker multiplication. In order to do this we need a corresponding version of the Holmes–Paulsen criterion for core matrices.

\begin{prop}\label{prop:core}
$W$ is a core of a complex $d\times 2d$ ETF if and only if $W$ is Hermitian, $W$ has zero diagonal and unimodular off-diagonal entries, $WJ_{2d-1}=0$, and $W^2+J_{2d-1}=(2d-1)I_{2d-1}$.
\end{prop}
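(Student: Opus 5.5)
The plan is to reduce the statement to Proposition \ref{prop:signature} with $N=2d$, so that $c=0$, and to translate the condition on the full matrix $S$ into conditions on its core. I take the core to be defined, as in Section \ref{sect:hadamard}, as the lower-right block of a signature matrix in standard form:
\[
S=\begin{pmatrix} 0 & \mathbf{1}^{\text{T}}\\ \mathbf{1} & W\end{pmatrix},
\]
where $\mathbf{1}$ is the all-ones vector of length $2d-1$ and $J_{2d-1}=\mathbf{1}\mathbf{1}^{\text{T}}$.

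First I handle the direction from ETF to core. Given a $d\times 2d$ ETF, Seidel switching (multiply $x_i$ by a unimodular scalar so that $\langle x_1,x_i\rangle>0$) puts its signature matrix into the standard form above. This operation conjugates $S$ by a diagonal unitary, so $S$ stays Hermitian, keeps zero diagonal and unimodular off-diagonal entries, and still satisfies $S^2=(2d-1)I$. Consequently $W$ is Hermitian with zero diagonal and unimodular off-diagonal entries.

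Next I compute $S^2$ in block form:
\[
S^2=\begin{pmatrix} \mathbf{1}^{\text{T}}\mathbf{1} & \mathbf{1}^{\text{T}}W\\ W\mathbf{1} & J_{2d-1}+W^2\end{pmatrix}.
\]
The top-left entry is $2d-1$, which matches $(2d-1)I$ automatically. The off-diagonal blocks vanish exactly when $W\mathbf{1}=0$, which is equivalent to $WJ_{2d-1}=0$; the block $\mathbf{1}^{\text{T}}W=(W^*\mathbf{1})^*$ then also vanishes by Hermitian symmetry. The bottom-right block gives $W^2+J_{2d-1}=(2d-1)I_{2d-1}$. So $S^2=(2d-1)I_{2d}$ is equivalent to the two stated conditions on $W$.

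For the converse, I start from a $W$ satisfying the listed conditions and build $S$ by bordering $W$ with a zero corner and all-ones first row and column. This $S$ is Hermitian, has zero diagonal and unimodular off-diagonal entries, and by the block computation satisfies $S^2=(2d-1)I$. Proposition \ref{prop:signature} with $N=2d$ then shows that $S$ is the signature matrix of a $d\times 2d$ ETF, and by construction $S$ is in standard form with core $W$.

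The only subtle point is the normalization in the first direction: one must make sure that switching produces exactly an all-ones border and not some other unimodular border. This is guaranteed by the choice of switching scalars, so the argument is essentially routine.
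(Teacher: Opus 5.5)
Your proof is correct and follows the same route as the paper: put the signature matrix in standard form, apply Proposition \ref{prop:signature} with $N=2d$ (so $c=0$), and border $W$ by $\mathbf{1}$ to get the converse. The paper leaves the block computation of $S^2$ implicit, and your explicit expansion fills in that step, including the observation that $\mathbf{1}^{\text{T}}W=0$ follows from $W\mathbf{1}=0$ by Hermitian symmetry.
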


\begin{proof}
These conditions immediately follow from Proposition \ref{prop:signature} when $S$ is in the standard form. Conversely, if $W$ satisfies these conditions, then $S=\begin{pmatrix}
0&\mathbf{1}^{\text T}\\
\mathbf{1}&W
\end{pmatrix}$
satisfies Proposition \ref{prop:signature}.
\end{proof}

The power construction for $r=2$ is different from the case $r>2$, so we show it in a separate lemma. For simplicity we use $I$ for $I_m$, $J$ for $J_m$ and include indices for $I$ and $J$ of larger orders.

\begin{lem}\label{lem:power2}
If $W$ is a core of a $\frac {m+1} 2\times (m+1)$ ETF, then $W_2=W\otimes W + I\otimes J - J\otimes I$ is a core of a $\frac {m^2+1} 2\times (m^2+1)$ ETF.
\end{lem}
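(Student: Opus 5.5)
We verify the four conditions of Proposition \ref{prop:core} for $W_2$, which has order $m^2$ (and $m^2=2d'-1$ with $d'=\frac{m^2+1}{2}$). By Proposition \ref{prop:core} applied to $W$, we have $W=W^*$, zero diagonal, unimodular off-diagonal entries, $WJ=JW=0$ (the second follows from the first by Hermiticity, since $J$ is real symmetric), and $W^2=mI-J$. We also use $J^2=mJ$, $J_{m^2}=J\otimes J$ and $I_{m^2}=I\otimes I$.

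\emph{Entrywise conditions.} Hermiticity is immediate, since $W\otimes W$, $I\otimes J$ and $J\otimes I$ are all Hermitian. For the entries, index rows by pairs $(a,b)$. If $a\neq a'$ and $b\neq b'$, only $W\otimes W$ contributes, giving $W_{aa'}W_{bb'}$, which is unimodular. If $a=a'$ and $b\neq b'$, then $W\otimes W$ vanishes because $W_{aa}=0$; moreover $J\otimes I$ vanishes, so only $I\otimes J$ contributes, giving $1$. Symmetrically, if $a\neq a'$ and $b=b'$, the entry is $-1$. On the diagonal, the three terms contribute $0+1-1=0$. Thus the zero-diagonal and unimodularity conditions hold.

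\emph{Row sums.} We compute $W_2J_{m^2}=WJ\otimes WJ+J\otimes J^2-J^2\otimes J = 0+mJ\otimes J-mJ\otimes J=0$.

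\emph{Quadratic condition.} We expand $W_2^2$ as a sum of nine products:
\[
W^2\otimes W^2 + W\otimes WJ + W\otimes JW - WJ\otimes W - JW\otimes W + I\otimes J^2 - J\otimes J - J\otimes J + J^2\otimes I .
\]
The four cross terms involving $W$ and $J$ vanish because $WJ=JW=0$. The remaining terms are
\[
(mI-J)\otimes(mI-J) + mI\otimes J - 2J\otimes J + mJ\otimes I = m^2 I\otimes I - J\otimes J .
\]
This equals $m^2I_{m^2}-J_{m^2}$, so $W_2^2+J_{m^2}=m^2 I_{m^2}$, which is exactly the last condition of Proposition \ref{prop:core}.

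By Proposition \ref{prop:core}, $W_2$ is therefore a core of an $\frac{m^2+1}{2}\times(m^2+1)$ ETF. There is no real obstacle here; the only delicate points are tracking the signs of the cross terms $I\otimes J$ and $J\otimes I$ and using $JW=0$, which comes from Hermiticity.
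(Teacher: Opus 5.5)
Your proposal is correct and follows essentially the same route as the paper: verify the conditions of Proposition \ref{prop:core} for $W_2$ by direct Kronecker-product computation. The only difference is that you write out details the paper leaves implicit, namely the entrywise check and the vanishing of the cross terms via $WJ=0$ and $JW=(WJ)^*=0$.
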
 

\begin{proof}
$W_2$ is Hermitian because the matrices $W$, $J$, $I$ are Hermitian. Clearly, diagonal entries of $W_2$ are 0 and off-diagonal entries are unimodular. $W_2 (J\otimes J) = m J\otimes J - m J\otimes J = 0$. Finally,

$$W_2^2=W^2\otimes W^2 + I\otimes J^2 + J^2\otimes I - 2 J\otimes J=$$

$$(mI-J)\otimes (mI-J) + m I\otimes J + m J\otimes I-2 J_{m^2}= m^2 I_{m^2} - J_{m^2}.$$
Thus, all conditions of Proposition \ref{prop:core} are satisfied so $W_2$ is a core of an ETF.
\end{proof}

Now we can prove the general result. The construction is essentially identical to the construction of Turyn.

\begin{theorem}\label{thm:power}
If there exists a $\frac {m+1} 2\times (m+1)$ ETF, then there exists a $\frac {m^r+1} 2\times (m^r+1)$ ETF for any $r\geq 2$.
\end{theorem}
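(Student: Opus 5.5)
The plan is to follow Turyn's strategy and work entirely at the level of cores, using Proposition \ref{prop:core} as the criterion. Let $W$ be a core of the given $\frac{m+1}2\times(m+1)$ ETF, so $W=W^*$, $W$ has zero diagonal and unimodular off-diagonal entries, $WJ=JW=0$ and $W^2=mI-J$. The case $r=2$ is Lemma \ref{lem:power2}. For $r>2$ I would look for a core $W_r$ of order $m^r$ of the form
\[
W_r=A\otimes W+B\otimes I+D\otimes J ,
\]
where $A,B,D$ are Hermitian matrices of order $m^{r-1}$ built from $W$, $I$ and $J$. Lemma \ref{lem:power2} is exactly the instance $A=W$, $B=-J$, $D=I$.

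First I would record what Proposition \ref{prop:core} demands of $(A,B,D)$. The entry conditions read blockwise as follows. In positions where the last tensor factor is diagonal, $B+D$ must have zero diagonal and unimodular off-diagonal entries. In positions where the last factor is off-diagonal, the entries of $A$ multiplied by unimodular numbers and added to those of $D$ must be unimodular, which essentially forces $A$ and $D$ to have complementary supports. The condition $W_r J_{m^r}=0$ becomes $(B+mD)J=0$. Expanding $W_r^2$ and using $WJ=0$ and $W^2=mI-J$, the cross terms between $A\otimes W$ and $D\otimes J$ vanish, and the requirement $W_r^2=m^rI-J\otimes J$ splits into three identities:
\[
AB+BA=0,\qquad mA^2+B^2=m^rI,\qquad -A^2+BD+DB+mD^2=-J .
\]
These conditions should be checked by the same mechanical computation as in Lemma \ref{lem:power2}.

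Next I would produce $(A,B,D)$ as Turyn does for skew conference matrices. Take $A$ to be a tensor product of copies of $W$ and $I$ (so that $A^2$ is a polynomial in the commuting idempotent-type matrices $I\otimes\cdots\otimes J\otimes\cdots$). Take $B$ and $D$ to be signed sums of Kronecker products of $I$'s, $J$'s and $W$'s. These should be organized by the first tensor position at which the row and column multi-indices coincide, so that the supports tile the $m^r\times m^r$ grid and the diagonal cancels, as $I\otimes J-J\otimes I$ does for $r=2$. I expect the recursion to go from $r$ to $r+2$, which would need a case split by the parity of $r$, with the base cases $r=2$ (Lemma \ref{lem:power2}) and $r=3$. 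Hermitian-ness and zero row sums are then automatic, since every building block is Hermitian and $WJ=0$. The only place where the complex setting differs from Turyn's is that $W^*=W$ replaces $W^{\text T}=-W$. Because each identity above involves only $W^2$ and $WJ$, the same sign pattern should transfer, with at most the sign of some $W$-factors adjusted.

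The main obstacle is choosing the signs and supports so that all three identities hold simultaneously. A naive choice already fails here. For instance, the direct generalization $A$ a core of order $m^{r-1}$, $B=-J$, $D=I$ forces both $A^2=mI-J$ and $A^2=m^{r-1}I-m^{r-2}J$, which is impossible for $r>2$. So $D$ cannot be a multiple of $I$, and $A$ must genuinely contain zero blocks that are compensated by $D$.

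I would first try $r=3$ by hand with the ansatz $A=W\otimes W+\lambda\, I\otimes J+\mu\, J\otimes I$, and $B$, $D$ in the span of $\{I\otimes I,\,I\otimes J,\,J\otimes I,\,J\otimes J,\,W\otimes I,\,I\otimes W\}$. I would solve the resulting finitely many polynomial equations in the coefficients. Then I would guess the general pattern, prove it by induction using the three identities, and conclude with Proposition \ref{prop:core}.
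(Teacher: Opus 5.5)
There is a genuine gap: the proposal never produces the core $W_r$, and the one concrete search it describes cannot succeed.

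\textbf{What is right, and what is missing.} Your reformulation is correct. Write $W_r=A\otimes W+B\otimes I+D\otimes J$ and use $WJ=JW=0$, $W^2=mI-J$ and the linear independence of $I,W,J$. Then Proposition~\ref{prop:core} is equivalent to your three identities together with the support and row-sum conditions. But the proposal stops where the proof must begin. No $(A,B,D)$ is ever exhibited, and ``guess the general pattern, prove it by induction'' along an unspecified $r\to r+2$ recursion is the entire content of the theorem.

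\textbf{Your $r=3$ ansatz has no solution.} Every matrix in it lies in the commutative algebra generated by $W\otimes I$, $I\otimes W$, $J\otimes I$, $I\otimes J$. This algebra is simultaneously diagonalized by tensor products of $\mathbf 1$ (where $J,W$ act by $m,0$) and eigenvectors of $W$ in $\mathbf 1^\perp$ (where they act by $0,\pm\sqrt m$). Take even the correct choice $\lambda=1$, $\mu=0$. Then $AB+BA=2AB=0$ and $mA^2+B^2=m^3I$ force $B=\pm m^{-1/2}J\otimes(mI-J)$. Consider positions off-diagonal in the first two factors and diagonal in the third. There the entry of $W_3$ is the sum of the $J\otimes J$-coefficients of $B$ and $D$. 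The coefficient from $D$ must vanish (look at positions off-diagonal in all three factors), so this entry is $\mp m^{-1/2}$, which is not unimodular for $m\ge 3$. A short case check rules out the other admissible $(\lambda,\mu)$ as well. The term you excluded is $J\otimes W$, and $A$ is not a single tensor product of $W$'s and $I$'s. The actual solution is $A=W\otimes W+I\otimes J$, $B=J\otimes W$, $D=W\otimes I$, that is,
\[
W_3=W\otimes W\otimes W+I\otimes J\otimes W+W\otimes I\otimes J+J\otimes W\otimes I .
\]

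\textbf{The missing idea: Turyn's closed formula, with no recursion in $r$.} First reduce to odd $r$: if $r=2^a r'$ with $r'$ odd, build the core for $m^{r'}$ and apply Lemma~\ref{lem:power2} $a$ times. For odd $r$, let $S$ range over all words of length $r$ in the letters $W,I,J$ in which, cyclically, every $I$ is immediately followed by a $J$ and every $J$ is immediately preceded by an $I$. Put $G_S=S_1\otimes\cdots\otimes S_r$ and $W_r=\sum_S G_S$. The verification rests on three facts.

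(i) For a given position, record which tensor factors are diagonal as a 0/1 pattern. Every pattern other than all ones is compatible with exactly one word: runs of ones are filled $I,J,I,\dots$ starting with $I$, a run of odd length forces a $J$ right after it, and every other letter is $W$. The all-ones pattern admits no word because $r$ is odd. This gives zero diagonal and unimodular off-diagonal entries. Since every word contains a $W$, it also gives $W_rJ_{m^r}=0$.

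(ii) Two distinct words always have $W$ in one and $J$ in the other at some position. Hence $G_SG_{S'}=0$, and $W_r^2=\sum_S G_S^2$ is a combination of Kronecker monomials in $I$ and $J$.

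(iii) $I^{\otimes r}$ and $J^{\otimes r}$ arise only from the all-$W$ word, with coefficients $m^r$ and $-1$. Every other monomial has cyclically adjacent factors $I,J$. Only words carrying $WW$ or $IJ$ at those two positions contribute to it, and replacing $WW$ by $IJ$ is a bijection between them. Since $(mI-J)\otimes(mI-J)+I\otimes mJ$ has no $I\otimes J$ term, these contributions cancel, leaving $W_r^2=m^rI-J_{m^r}$.
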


\begin{proof}
It is sufficient to prove the theorem for odd $r$: if $r=2^a r'$, $a\geq 1$ and $r'$ is odd, we can use Lemma \ref{lem:power2} $a$ times and reduce the problem to the odd case. Let $W$ be a core of a $\frac {m+1} 2\times (m+1)$ ETF. In this proof, \textit{a string} means a sequence of letters $W$, $I$, and $J$ such that after $I$ there is always a $J$ and before $J$ there is always an $I$. The order is defined cyclically so this condition must be satisfied for the first and last letters of a string as well. For each string $S=(S_1, \ldots, S_r)$ we define a matrix
\[
G_S=S_1\otimes \ldots \otimes S_r.
\]
We claim that $W_r=\sum_S G_S$ is a core of an ETF.

First we show that for any 0/1-sequence $L$ of length $r$, with the exception of a sequence of all ones, there exists exactly one string $S$ such that 0 in $L$ corresponds to $W$ or $J$ in $S$ and 1 in $L$ corresponds to $I$ or $J$ in $S$. Consider blocks of consecutive ones in $L$. The first element of each such block must correspond to $I$ in $S$. The next element corresponds to $J$. The third one in the block, if it exists, corresponds to $I$ again, etc. Therefore, we can uniquely reconstruct correspondences for all ones in $L$ which immediately determines all $I$ and $J$ letters in a string. All remaining zeros in $L$ necessarily correspond to $W$ in $S$.

Using this we can deduce that for any two strings, there is $I$ in one string and $W$ in the other string at the same position. Assume this is not the case for two strings $S_1$ and $S_2$. We construct a 0/1-sequence using the following procedure: for a position in a sequence, we choose 0 if at least one of $S_1$ and $S_2$ has $W$ at this position or they both have $J$; we choose 1 if at least one of $S_1$ and $S_2$ has $I$ at this position. The constructed sequence corresponds to both $S_1$ and $S_2$, contradicting the uniqueness of a string proved above.

Note that the way strings are defined is essentially symmetric for $I$ and $J$: if we swap all $I$ and $J$ and read a string in the opposite order, we get a string again. Therefore, for any two strings, there is $J$ in one string and $W$ in the other string at the same position.

Now we are ready to show that $W_r$ possesses the properties of a core. All tensor factors are Hermitian so $W_r$ is Hermitian too. Clearly, it has zeros on the diagonal, since every string contains at least one $W$ for an odd $r$. For every off-diagonal entry, we can encode it by its position in the Kronecker product as a 0/1 sequence: if it is on the diagonal for a particular tensor factor, we choose 1; if it is not on the diagonal, we choose 0. There is a unique string $S$ corresponding to this 0/1-sequence and the matrix $G_S$ has a unimodular entry at this position so the entry of $W_r$ is unimodular.

Since $J_{m^r}=J\otimes\ldots\otimes J$ and each $S$ has at least one $W$, $G_S J_{m^r} = 0$ for every $S$. The only remaining property is $W_r^2=m^r I_{m^r} - J_{m^r}$. In order to show this we notice that $G_{S_1} G_{S_2}=0$ for different $S_1$ and $S_2$ because different strings have tensor factors $W$ and $J$ in the same position. Therefore, $W_r^2=\sum_S G_S^2$. Each $G_S^2$ contains only $W^2=mI-J$, $I^2=I$, and $J^2=mJ$ as its factors so $W_r^2$ is a linear combination of Kronecker products of $I$ and $J$.

The only way to get $I_{m^r}=I\otimes\ldots\otimes I$ or $J_{m^r}=J\otimes\ldots\otimes J$ in this combination is from the string with all $W$ letters. Their coefficients in the expansion are $m^r$ and $(-1)^r=-1$, respectively. It remains to show that the coefficients of all other Kronecker products of $I$ and $J$ vanish. Without loss of generality, take a product $I\otimes J \otimes \ldots$. It can occur in the expansion of $W_r^2$ from a string that starts with $I, J$ or from a string that starts with $W, W$. We note that $S_1=(W,W,S')$ and $S_2=(I,J,S')$ for the same sequence $S'$ of length $r-2$ are strings or non-strings simultaneously. $G_{S'}$ is defined similarly to $G_S$ as a Kronecker product of the matrices corresponding to letters in $S'$.

$$G_{S_1}^2+G_{S_2}^2=(W^2\otimes W^2 + I^2\otimes J^2)\otimes G_{S'}^2=$$

$$\left((mI-J)\otimes (mI-J) + m I\otimes J\right)\otimes G_{S'}^2= (-m+m) I\otimes J \otimes G_{S'}^2 + \ldots,$$
so the contributions of $S_1$ and $S_2$ to any term $I\otimes J\otimes \ldots$ cancel each other, which completes the proof of the theorem.

\end{proof}

\section{Discussion}\label{sect:discuss}

The paper is devoted to constructions of ETFs with $c=0$. Naturally, we can ask whether the multiplying construction of Theorem \ref{thm:mult} works for other $c$. One straightforward example is the case $c=\pm 2, k=\pm 2$, where we can take an ETF-amicable pair $(X,Y)$ such that $Y^2=cY+(N-1)I$ and $X=\pm(Y-\frac c 2 I)$. In this case, $X$ is a Hermitian Hadamard matrix with constant diagonal, a class well known in the literature \cite{Sz13}. The construction of Theorem \ref{thm:mult} just produces a Kronecker product of such complex Hadamard matrices. Apart from this concrete construction and the case $c=0$, there are no other ETF-amicable pairs. The following short spectral argument was provided by AI when answering the prompt asking about examples of pairs for other $c$.

\begin{prop}\label{prop:otherc}
ETF-amicable pairs exist only for $c=0$ or $c=\pm 2$. In the case $c=\pm 2$, the only pairs are those determined by Hermitian Hadamard matrices with constant diagonal as described above.
\end{prop}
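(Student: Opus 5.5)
The plan is a spectral argument. I will decompose $\mathbb{C}^N$ into the two eigenspaces of $Y$, read off the block structure that the amicability condition (\ref{eq:amic}) forces on $X$, and then use unimodularity and the trace of $Y$ to pin down $c$.

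\textbf{Step 1: block form of $X$.} By Proposition \ref{prop:signature}, $Y^2=cY+(N-1)I$. So the Hermitian matrix $Y$ has exactly two eigenvalues $\lambda_1\neq\lambda_2$, the roots of $t^2-ct-(N-1)=0$. They satisfy $\lambda_1+\lambda_2=c$ and $\lambda_1\lambda_2=-(N-1)$. Let $E_1,E_2$ be the eigenspaces, of dimensions $m_1,m_2$ with $m_1+m_2=N$, and write $X=\begin{pmatrix}A&B\\B^*&D\end{pmatrix}$ in an orthonormal basis adapted to $E_1\oplus E_2$. Then
\[
XY+YX=\begin{pmatrix}2\lambda_1A&(\lambda_1+\lambda_2)B\\(\lambda_1+\lambda_2)B^*&2\lambda_2D\end{pmatrix}.
\]
Comparing with (\ref{eq:amic}), the off-diagonal blocks give $cB=cB$, which is automatic. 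The diagonal blocks give $(\lambda_2-\lambda_1)A=kNI$ and $(\lambda_1-\lambda_2)D=kNI$. Hence $A=aI$ and $D=-aI$ with $a=kN/(\lambda_2-\lambda_1)$ real. So $X=a(P_1-P_2)+K$, where $P_1,P_2$ are the spectral projections of $Y$ and $K=\begin{pmatrix}0&B\\B^*&0\end{pmatrix}$.

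\textbf{Step 2: the dichotomy.} The condition $X^2=NI$ gives $BB^*=(N-a^2)I_{m_1}$ and $B^*B=(N-a^2)I_{m_2}$. Taking traces yields $m_1(N-a^2)=\|B\|_F^2=m_2(N-a^2)$. So either $m_1=m_2$ or $a^2=N$.
\begin{itemize}
\item If $m_1=m_2$: since $Y$ has zero diagonal, $0=\operatorname{tr}Y=m_1\lambda_1+m_2\lambda_2=m_1c$, hence $c=0$.
\item If $a^2=N$: then $B=0$, so $X=\pm\sqrt N(P_1-P_2)$.
\end{itemize}

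\textbf{Step 3: the case $a^2=N$.} Write $P_1-P_2=(2Y-cI)/(\lambda_1-\lambda_2)$, where $(\lambda_1-\lambda_2)^2=c^2+4(N-1)$. The diagonal entries of $X$ are then $\mp\sqrt N\,c/\sqrt{c^2+4(N-1)}$. These must be unimodular because $X$ is a complex Hadamard matrix. This gives $Nc^2=c^2+4(N-1)$, i.e.\ $(N-1)(c^2-4)=0$, so $c=\pm2$ for $N>1$. Consequently $\lambda_1-\lambda_2=\pm2\sqrt N$, and $X=\pm(Y-\tfrac c2 I)$, which is exactly the constant-diagonal Hermitian Hadamard family described before the proposition.

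\textbf{Main obstacle.} The key step is the first one: recognizing that (\ref{eq:amic}) forces the diagonal blocks of $X$ to be scalar. After that, the whole classification reduces to the trace identity $m_1(N-a^2)=m_2(N-a^2)$. The rest is bookkeeping, plus the observation that the $c=\pm2$ branch cannot contain further pairs, because $B$ vanishes there and $X$ is determined by $Y$ up to sign.
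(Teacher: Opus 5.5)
Your proposal is correct and follows the paper's proof essentially step for step. Both arguments pass to the eigenbasis of $Y$, use the amicability condition to force scalar diagonal blocks $\pm hI$, and compare the traces of $BB^*$ and $B^*B$. That comparison gives either equal eigenspace dimensions, hence $c=0$ via $\operatorname{tr}Y=0$, or $h^2=N$, hence $B=0$ and $X$ a multiple of $Y-\frac c2 I$. The only cosmetic difference is in the last step: you get $c=\pm2$ from the diagonal entries using $a^2=N$ directly, whereas the paper first uses unimodularity of the off-diagonal entries to fix the scalar $\frac{2h}{\lambda_1-\lambda_2}=\pm1$.
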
 

\begin{proof}
Let $(X,Y)$ be an ETF-amicable pair with constants $c$ and $k$. Since $Y^2=cY+(N-1)I$, $Y$ has two eigenvalues $\lambda_{1,2}=\frac {c\pm\sqrt{c^2+4(N-1)}} {2}$. We consider $X$ and $Y$ in the eigenbasis of $Y$ so that
\[Y=
\begin{pmatrix}
\lambda_1 I_p&0\\
0&\lambda_2 I_q
\end{pmatrix},\qquad
X=
\begin{pmatrix}
A&B\\
B^*&C
\end{pmatrix},
\]
where $p$ and $q$ are the respective dimensions of the eigenspaces. From the amicability condition $XY+YX+kNI=cX$, we get $A=hI_p$ and $C=-hI_q$, where $h=\frac{kN}{c-2\lambda_1}$.

The Hadamard condition $X^2=NI$ implies $BB^*=(N-h^2)I_p$ and $B^*B=(N-h^2)I_q$. We note that $BB^*$ and $B^*B$ have equal traces so either $p=q$ or $N-h^2=0$. In the former case, using that the trace of $Y$ is 0 we get $c=\lambda_1+\lambda_2=0$. In the latter case, both $BB^*=0$ and $B^*B=0$ so
\[
X=
\begin{pmatrix}
hI_p&0\\
0&-hI_q
\end{pmatrix}
=\frac {2h} {\lambda_1-\lambda_2} \left(Y-\frac c 2 I\right).
\]
Returning to the original basis, $X$ has unimodular off-diagonal entries only if $\frac {2h} {\lambda_1-\lambda_2}=\pm 1$. Then the diagonal entries of $X$ are unimodular only if $c=\pm 2$. Combining these constraints, we get $X=\pm(Y-\frac c 2 I)$.
\end{proof}

For $N=2$ and 6 there are one-parameter families of ETF-amicable pairs. It would be interesting to find ETF-amicable families for other $N$ as well. The construction for $N=6$ was built on a family of complex conference matrices of order 6 from \cite{ET16}. The paper \cite{ET16} also contains families of complex conference matrices of orders 10 and 14 based on parametrizations of complex Hadamard matrices in \cite{Sz08}. However, these families do not seem to lead to ETF-amicable pairs.  Interestingly, the multiplying by 6 construction from Subsection \ref{subsect:6} in the case $k=0$ does not seem to be equivalent to the general construction from Subsection \ref{subsect:1mod4} for $q=5$.

The medium version of Conjecture \ref{conj:double} \cite[Conjecture 20]{IJM24} states the existence of 2-circulant ETFs of size $d\times 2d$. It would be interesting to check whether the multiplying and power constructions can produce 2-circulant ETFs.

\section{Acknowledgments}

Alexey Glazyrin was partially supported by the NSF grants DMS-2054536, DMS-2349063 and the Simons Foundation's Travel Support for Mathematicians program.

\section{AI statement}

ChatGPT 4.6 Sol was used for literature search and light copyediting throughout the paper. The constructions in Subsections \ref{subsect:6}, \ref{subsect:1mod4} and the argument for Proposition \ref{prop:otherc} were found with AI assistance. All text in the paper was written by the author, who takes full responsibility for its correctness.

\bibliography{ETFs}
\bibliographystyle{plainnat}

\end{document}